\newtheorem{theorem}{Theorem}[section]
\newtheorem{lemma}[theorem]{Lemma}
\newtheorem{proposition}[theorem]{Proposition}
\newcommand{\msum}{\mathop{\sum}}
\newcommand{\xbar}{\bar{X}_n}
\newcommand{\twofam}[2]{#1^{(#2)}}
\newcommand{\mtxt}[1]{\mathcal{#1}_n}
\newcommand{\varmn}{{\rm var} \left(\mtxt{M}\right)}
\newcommand{\transpose}[1]{#1^{\top}}
\newcommand{\sammean}{\transpose{\bar{X}_n} \bar{X}_n}
\newcommand{\tr}{{\rm tr}}
\newcommand{\cp}{\overset{p}{\rightarrow}}    
\begin{document}
\title{Note on Mean Vector Testing for High-Dimensional Dependent Observations}

\author[1]{Seonghun Cho} 
\author[1]{Johan Lim}
\author[2]{Deepak Nag Ayyala\corref{augusta}\fnref{augusta}}
\ead{dayyala@augusta.edu}
\author[3]{Junyong Park}
\author[3]{Anindya Roy}

\cortext[augusta]{Corresponding author}
\address[1]{Department of Statistics, Seoul National University, Seoul, Korea}
\address[2]{Department of Population Health Sciences, Medical College of Georgia, Augusta University, Augusta, Georgia}
\address[3]{Department of Mathematics and Statistics, University of Maryland Baltimore County, Baltimore, Maryland}
\fntext[augusta]{Mailing address: Department of Population Health Sciences, Augusta University, 1120 15th St, Augusta, GA 30912.}

\begin{abstract}
\noindent  For the mean vector test in high dimension, Ayyala et al.(2017,153:136-155) proposed  new test statistics when the observational vectors are M dependent. 
Under certain conditions, the test statistics for one-same and two-sample cases 
were shown to be asymptotically normal. While the test statistics and the asymptotic results are valid,  
some parts of the proof of asymptotic normality need to be corrected.  
In this work, we provide corrections to  the  proofs of their main theorems. We also note a few minor discrepancies in calculations in the publication.
\end{abstract}

\begin{keyword}
	high dimension \sep mean vector testing \sep asymptotics
\end{keyword}
\makeatletter
\def\ps@pprintTitle{%
	\let\@oddhead\@empty
	\let\@evenhead\@empty
	\def\@oddfoot{}%
	\let\@evenfoot\@oddfoot}
\makeatother

\maketitle
\section{Introduction}
Mean vector testing in high dimension is gaining great attention in the recent past with increasing availability of data sets where the  number of variables is greater than the sample size. The traditional Hoteling's $T^2$ test and the more recently developed test statistics assume that the observations are independently and identically distributed. Comparison of mean vector for distributions where the samples are dependent is a relatively understudied problem. The one-sample mean vector test under dependence is defined as follows. The observations $X_1, \ldots ,X_n$ are assumed to be $p$-dimensional random vectors with mean $\mu$ and a covariance matrix $\Sigma$ that may not be diagonal. The problem of interest is then to test
\begin{equation}
\mathcal{H}_0: \mu = 0 \hspace{6mm} \mbox{versus} \hspace{6mm} \mathcal{H}_A: \mu \neq 0.
\label{eqn:onefammodel}
\end{equation}
In the two-sample case, $X_1^{(1)}, \ldots , X_{n_1}^{(1)}$ and $X_1^{(2)}, \ldots , X_{n_2}^{(2)}$ are two independent groups of $p$-dimensional observations with mean vectors $\mu^{(1)}$, $\mu^{(2)}$ and covariance matrices $\Sigma^{(1)}, \Sigma^{(2)}$, respectively. The hypothesis of interest is that the two population means are equal, viz.
\begin{equation}
\mathcal{H}_0: \twofam{\mu}{1} = \twofam{\mu}{2} \hspace{6mm} \mbox{versus} \hspace{6mm} \mathcal{H}_A: \twofam{\mu}{1} \neq \twofam{\mu}{2}.
\end{equation}

In \citep{ayyala2017mean}, we developed a hypothesis test for \eqref{eqn:onefammodel} when the samples follow an $M$-dependent  stationary Gaussian process with mean $\mu$ and autocovariance structure given by $\Gamma(0), \ldots, \Gamma(M)$. The rate of increase with respect to $n$ was assumed to be linear for $p$ ($p = O(n)$) and sublinear  for $M$ ($M = O(n^{1/8})$). These assumptions ensure that the number of variables is not increasing faster than the sample size and that there are sufficiently many observations for estimating the autocovariance at all lags. The dependence structure is made sparse by assuming $\tr (\Gamma(a) \Gamma(b) \Gamma(c) \Gamma(d)) = o \{ \tr^2 ( \Omega_n^2) \}$ for any $a, b, c, d \in \mathcal{M}$ where $\mathcal{M} = \{-M, \ldots, M \}$ and the matrix $\Omega_n$ is the covariance of $\xbar$ multiplied by the sample size, i.e., $\Omega_n = \msum_{h \in \mathcal{M}} n^{-1} (n - |h|) \Gamma(h)$. Summarizing the model, the assumptions are
\begin{equation}
p = O(n), \hspace{5mm} M = O(n^{1/8}), \hspace{5mm} \tr (\Gamma(a) \Gamma(b) \Gamma(c) \Gamma(d)) = o \{ \tr^2 ( \Omega_n^2) \} \,\, \forall \,\, a,b,c,d \in \mathcal{M}.
\label{eqn:assumptions}
\end{equation}

The test statistic is based on the Euclidean norm of the sample average, $\transpose{\xbar} \xbar$. Define $\mtxt{M} = \sammean - n^{-1} \widehat{\tr (\Omega_n)}$ where $\widehat{\tr (\Omega_n)}$ is an unbiased estimator for $\tr (\Omega_n)$. Under $H_0$, this quantity has expected value $\transpose{\mu} \mu$. Using this property, the test statistic is constructed as
\begin{equation}
\mtxt{T} = \frac{ \mtxt{M}}{\sqrt{\widehat{\varmn}}}, 
\label{eqn:osteststat}
\end{equation}
where $\widehat{\varmn}$ is a ratio-consistent estimator for $\varmn$, i.e.,  $\frac{\widehat{\varmn}}{\varmn} \cp 1$. For expressions of $\widehat{\tr (\Omega_n)}$ and $\widehat{\varmn}$, refer to \citet*{ayyala2017mean}. Under the null hypothesis, the test statistic is shown to be asymptotically normal. 
When we define  $a_n \simeq b_n$ if $a_n-b_n \rightarrow 0$, 
 the power function at significance level $\alpha$ under a local alternative is derived as
\begin{equation}
\beta_n(\mu) \simeq  \Phi \left( -z_{\alpha} + \frac{ n \transpose{\mu} \mu}{\sqrt{ 2 \tr (\Omega_n^2)}} \right) 
\end{equation}
as $n \rightarrow \infty$ where $\Phi$ is the cumulative distribution of standard normal distribution.

The mean in the local alternative satisfies the condition $\transpose{\mu} \{\Gamma(a) \Gamma(-a)\}^{1/2} \mu = o \{ \tr (\Omega_n^2) \}$ for all $a \in \mathcal{M}$. 

In the proof of asymptotic normality of $\mtxt{T}$, we used a two-dimensional array argument. The main idea was to divide the array into blocks which were at least $M$ indices apart. The proof was established by showing that (a) these blocks dominate the remainder of terms and (b) the blocks are independent and hence central limit theorem can be used by verifying Lyapunov conditions. However upon further inspection, we see the blocks are not independent, albeit being uncorrelated. Also, we have identified convergence issues for asymptotic power under the rate of increase of $M$ assumed in \eqref{eqn:assumptions}. The main result and numerical studies of \citet*{ayyala2017mean} remain valid and provide the justification of the use of the test statistic. However, some of the theoretical details of the paper need to be revised.  

In this article, we address these issues and provide corrected proofs for the results in \citet*{ayyala2017mean} as well as the further discussion on the power of the test. 
The remainder of the paper is organized as follows. In Section \ref{sec:normality}, a corrected proof for asymptotic normality for the one-sample test statistic is provided. Using the corrected proof, the result for the two-sample case is verified in Section \ref{sec:twosamptest}. Asymptotic power of the test statistic for the one sample case is presented in Section \ref{sec:power}. The notation used in the remainder of the article is made to be consistent with \citet*{ayyala2017mean}. For definitions and additional details on the variables defined, kindly refer to \citet*{ayyala2017mean}.

\section{Proof of theorem 3.1 of \citet*{ayyala2017mean}}
\label{sec:normality}

In the proof of theorem 3.1 of \citet*{ayyala2017mean}, the authors  claim that the variable $B_{ij}$ defined for all $i,j \in \{ 1,\ldots,k_{n} \}$ (formal 
definition of $B_{ij}$ is given in Section \ref{sec:theoremproof}) are mutually independent since the process is Gaussian with $M$-dependent stationarity. Using the independence of $B_{ij}$'s, the authors established  that the Lyapunov condition, a sufficient condition for the central limit theorem to hold for the leading term of the proposed statistic, and thus the testing statistic was shown to be asymptotically normal. Upon further inspection, we find that the variable $B_{ij}$'s are not mutually 
independent and hence a dependent central limit theorem is needed to show the asymptotic normality of the proposed statistic.

Here, we present a new proof for the asymptotic normality of the leading term $\sum_{i,j=1}^{k_{n}} B_{ij}$. Following the proof of the theorem 1 of \citet*{chen2010two}, we exploit the martingale CLT \citep[Corollary 3.1]{hall2014martingale}. Before the proof, we rewrite some assumptions and notations of \citet*{ayyala2017mean}.

\subsection{Assumptions and notations} 
\label{sec:assumptions}
The authors assume the following:
\begin{enumerate}[label=(A\arabic*)]
\item \label{assump:stationarity} $X_{1},\ldots,X_{n} \in \mathbb{R}^{p}$ follow an $M$-dependent strictly stationary Gaussian process with mean $\mu$ and autocovariance structure given by $\Gamma(0),\ldots,\Gamma(M)$. That is, for $h \in \mathcal{M} = \{ -M,\ldots,M \}$
\begin{equation} \nonumber
{\rm Cov}(X_{t},X_{t+h}) = \Gamma(h).
\end{equation}
\item \label{assump:p} $p = O(n)$ and $p \rightarrow \infty$.
\item \label{assump:M} $M = O(n^{1/8})$.
\item \label{assump:Omega} $\Omega_{n} = n{\rm Cov}(\bar{X}_{n}) = \sum_{h \in \mathcal{M}} \left( 1-\frac{|h|}{n} \right) \Gamma(h) \rightarrow F(0)$ where $F(0)$ is the spectral matrix evaluated at the zero frequency.
\item \label{assump:trace} ${\rm tr}\left( \Gamma(a)\Gamma(b)\Gamma(c)\Gamma(d) \right) =   o\left(   (M+1)^{-4}     {\rm tr}^{2}\left( \Omega_{n}^{2} \right) \right)$ where the rate of decay is uniform for all $a,b,c,d \in \mathcal{M}$.
\end{enumerate}
The naive sample estimator of the autocovariance matrix $\Gamma(h)$ at lag $h \in \mathcal{M}_{+} = \{ 0,\ldots,M \}$ is denoted by
\begin{equation} \nonumber
\widehat{\Gamma}(h) = \frac{1}{n}\sum_{t=1}^{n-h}\left(X_{t}-\bar{X}_{n}\right)\left(X_{t+h}-\bar{X}_{n}\right)^{\rm T},
\end{equation}
with $\widehat{\Gamma}(-h) = \widehat{\Gamma}(h)^{\top}$. To construct an unbiased estimator of ${\rm tr}\left(\Omega_{n}\right)$, let
\begin{equation} \nonumber
\gamma = \begin{pmatrix}
{\rm tr}(\Gamma(0)) \\ \vdots \\ {\rm tr}(\Gamma(M))
\end{pmatrix} \in \mathbb{R}^{M + 1}, \quad\mbox{and}\quad \widehat{\gamma}_{n} = \begin{pmatrix}
{\rm tr}(\widehat{\Gamma}(0)) \\ \vdots \\ {\rm tr}(\widehat{\Gamma}(M))
\end{pmatrix} \in \mathbb{R}^{M + 1}.
\end{equation}
Then, the expected value of $\widehat{\gamma}_{n}$ is ${\rm E}\left( \widehat{\gamma}_{n} \right) = \Theta_{n} \gamma$ where $\Theta_{n} \in \mathbb{R}^{\mathcal{M}_{+}\times\mathcal{M}_{+}}$ is a coefficient matrix. Since ${\rm tr}\left(\Omega_{n}\right) = b_{n}^{\rm T}\gamma$ where $b_{n} \in \mathbb{R}^{\mathcal{M}_{+}}$ with $b_{n}(0) = 1$ and $b_{n}(h) = 2\left(1 - \frac{h}{n}\right)$ for $h = 1,\ldots,M$,
\begin{equation} \nonumber
\widehat{{\rm tr}\left(\Omega_{n}\right)} = \beta_{n}^{\rm T} \widehat{\gamma}_{n}
\end{equation}
is an unbiased estimator of ${\rm tr}\left( \Omega_{n} \right)$ where $\beta_{n} = \Theta_{n}^{-1}b_{n} \in \mathbb{R}^{\mathcal{M}_{+}}$. Therefore, we have
\begin{equation} \label{eq:Mn}
E(\mtxt{M}) = E\left( \bar{X}_{n}^{\rm T}\bar{X}_{n} - \frac{1}{n} 
\widehat{{\rm tr}\left(\Omega_{n}\right)} \right) = \mu^{\rm T}\mu.
\end{equation}
According to \citet*{ayyala2017mean}, the limiting expression of the variance of $\mtxt{M}$ is expressed as 
\begin{equation} \label{eq:varMn}
{\rm var}(\mtxt{M}) = \frac{2}{n^{2}}{\rm tr}\left( \Omega_{n}^{2} \right) + o\left( \frac{1}{n^{2}}{\rm tr}\left( \Omega_{n}^{2} \right) \right).
\end{equation}

\subsection{Theorem 3.1 of  \citet*{ayyala2017mean}} 
\label{sec:theoremproof}
The main theorem of \citet*{ayyala2017mean} is stated as follows: 

\begin{theorem} \label{thm:ayyala} {\rm \citep*[theorem 3.1]{ayyala2017mean}}
Suppose that the assumptions \ref{assump:stationarity}$\sim$\ref{assump:trace} and the null hypothesis $H_{0}$ hold. Then, as $n \rightarrow \infty$,
\begin{equation} \label{eq:CLT}
T_{n} = \frac{\mtxt{M}}{\sqrt{\varmn}} \stackrel{d}{\rightarrow} N(0,1).
\end{equation}
\label{thm:theorem21}
\end{theorem}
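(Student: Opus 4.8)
The plan is to reduce \eqref{eq:CLT} to a central limit theorem for the leading term $S_{n} := \sum_{i,j=1}^{k_{n}} B_{ij}$ and to obtain that limit from a martingale CLT in place of the erroneous independence argument. Writing $\mtxt{M}$ as a constant multiple of $S_{n}$ plus a remainder of smaller stochastic order and recalling $\varmn = 2 n^{-2}\tr(\Omega_{n}^{2})(1+o(1))$ from \eqref{eq:varMn}, it suffices to prove $S_{n}/\sqrt{\var(S_{n})} \cd N(0,1)$; the conclusion \eqref{eq:CLT} then follows because $\mtxt{M}/\sqrt{\varmn}$ and $S_{n}/\sqrt{\var(S_{n})}$ differ by an $o_{p}(1)$ term.

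First I would order the blocks and build a martingale-difference decomposition. Let $\mathcal{F}_{n,j}$ be the $\sigma$-field generated by the observation groups underlying the first $j$ blocks, and let $T_{j}$ denote the centered sum of observations in the $j$-th group, so that the off-diagonal blocks are cross-products $B_{ij} = \transpose{T_{i}} T_{j}$ for $i \neq j$, while the diagonal blocks $B_{jj}$ are the corresponding self-products centered through the subtraction of $\widehat{\tr(\Omega_{n})}$ in $\mtxt{M}$. Since the groups are separated by at least $M$ indices, the $T_{j}$ are mutually independent by $M$-dependence, and in particular $T_{j}$ is independent of $\mathcal{F}_{n,j-1}$. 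Grouping the summands of $S_{n}$ by the larger of their two indices, I would set
\begin{equation} \nonumber
D_{n,j} = B_{jj} + \sum_{i=1}^{j-1}\left( B_{ij} + B_{ji} \right), \qquad j = 1,\ldots,k_{n},
\end{equation}
so that $S_{n} = \sum_{j=1}^{k_{n}} D_{n,j}$ and each $D_{n,j}$ is $\mathcal{F}_{n,j}$-measurable. Under $H_{0}$ the $T_{j}$ and the diagonal blocks are centered, whence $E(D_{n,j} \mid \mathcal{F}_{n,j-1}) = 0$ and $\{D_{n,j},\mathcal{F}_{n,j}\}$ is a martingale-difference array. The correction enters precisely here: distinct $D_{n,j}$ share common factors $T_{i}$, so they are uncorrelated but not independent, and only the martingale property --- not the independence claimed in the original proof --- is available.

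With this structure in hand I would invoke Corollary 3.1 of \citet{hall2014martingale}, whose hypotheses are (i) convergence of the normalized conditional variance,
\begin{equation} \nonumber
V_{n} := \frac{1}{\var(S_{n})}\sum_{j=1}^{k_{n}} E\left( D_{n,j}^{2} \mid \mathcal{F}_{n,j-1} \right) \cp 1,
\end{equation}
and (ii) a conditional Lindeberg condition, which I would discharge through the Lyapunov-type bound $\var(S_{n})^{-2}\sum_{j=1}^{k_{n}} E( D_{n,j}^{4} ) \rightarrow 0$. After expanding the Gaussian quadratic forms --- for the cross terms, $E( D_{n,j}^{2} \mid \mathcal{F}_{n,j-1})$ is a quadratic form in $\sum_{i<j} T_{i}$ with matrix $\var(T_{j})$ --- both conditions reduce to estimating sums of traces of products of the autocovariance matrices $\Gamma(h)$, $h \in \mathcal{M}$, across the blocks.

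The main obstacle will be verifying (i). Establishing $V_{n} \cp 1$ amounts to showing $\var(V_{n}) \rightarrow 0$, and since the conditional variances are themselves quadratic forms in the blocks, this forces control of fourth-order trace terms of the type $\tr(\Gamma(a)\Gamma(b)\Gamma(c)\Gamma(d))$. This is exactly where the strengthened sparsity assumption \ref{assump:trace}, carrying the factor $(M+1)^{-4}$, is indispensable: summing such terms over the admissible lags and across the blocks produces a multiplicity of order $(M+1)^{4}$ that must be absorbed so that the residual $V_{n}-1$ vanishes. The $M$-dependence additionally generates boundary cross-terms between neighboring groups that are absent in the i.i.d.\ setting of \citet{chen2010two}; using assumptions \ref{assump:M} and \ref{assump:trace} I would show these are of smaller order than $\tr(\Omega_{n}^{2})$. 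The Lyapunov bound in (ii) is comparatively routine once these trace estimates are in place.
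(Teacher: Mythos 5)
Your proposal is correct and follows essentially the same route as the paper: both reduce the claim to the block sum $\sum_{i,j=1}^{k_n} B_{ij}$, order the blocks, and apply the martingale CLT \citep[Corollary 3.1]{hall2014martingale} in the style of \citet{chen2010two}, verifying the conditional-variance condition and a fourth-moment (Lyapunov-type) bound for the conditional Lindeberg condition, with assumption \ref{assump:trace} absorbing the $(M+1)^4$ multiplicity of the fourth-order trace terms. The only difference is bookkeeping: you fold the centered diagonal blocks $B_{jj}$ into the martingale differences $D_{n,j}$, whereas the paper keeps the martingale purely off-diagonal ($i<j$ cross-products of the independent block means $Y_i$) and disposes of $\sum_{i} B_{ii}$ separately via the classical CLT and the scaling factor $1/\sqrt{k_n}$ --- both are valid, since $B_{jj}$ is mean zero, independent of the past blocks, and contributes variance of order $\var(\mtxt{M})/k_n$.
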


To show the asymptotic normality of $T_{n}$, \citet*{ayyala2017mean} decompose $T_{n}$ as
\begin{equation} \label{eq:TnDecomp}
T_{n} = \frac{M_{n}}{\sqrt{{\rm var}\left(\mtxt{M}\right)}} = \frac{\bar{X}_{n}^{\rm T}\bar{X}_{n} - \frac{1}{n} {\rm tr}\left(\Omega_{n}\right)}{\sqrt{{\rm var}\left(\mtxt{M}\right)}} - \frac{\widehat{{\rm tr}\left(\Omega_{n}\right)} - {\rm tr}\left(\Omega_{n}\right)}{n\sqrt{{\rm var}\left(\mtxt{M}\right)}} = \Delta_{1} - \Delta_{2}
\end{equation}
where
\begin{equation} \label{eq:delta1}
\Delta_{1} = \frac{\bar{X}_{n}^{\rm T}\bar{X}_{n} - \frac{1}{n} {\rm tr}\left(\Omega_{n}\right)}{\sqrt{\varmn}}
\end{equation}
and
\begin{equation} \label{eq:delta2}
\Delta_{2} = \frac{\widehat{{\rm tr}\left(\Omega_{n}\right)} - {\rm tr}\left(\Omega_{n}\right)}{n\sqrt{\varmn}},
\end{equation}
and show that $\Delta_{1} \stackrel{d}{\rightarrow} N(0,1)$ and $\Delta_{2} \stackrel{p}{\rightarrow} 0$. 
To establish the asymptotic normality of $\Delta_{1}$, \citet*{ayyala2017mean} define the following quantities: 
an $n \times n$ matrix $\mathcal{A}$ with $(i,j)$th element 
\begin{equation} \nonumber
\mathcal{A}_{ij} = \frac{1}{n^{2}}\left[ X_{i}^{\rm T}X_{j} - {\rm tr}\left( \Gamma(i-j) \right) \right],
\end{equation}
where  $\sum\sum_{i,j} \mathcal{A}_{ij} =\bar{X}_{n}^{\rm T}\bar{X}_{n} - \frac{1}{n} {\rm tr}\left(\Omega_{n}\right)$;  
constants $\alpha \in (0,1)$ and $C>0$ such that $w_{n} = C \cdot n^{\alpha}$, and represent 
 $n = w_{n}k_{n}+r_{n}$ 
where $0 \le r_{n} < w_{n}$; 
for $i,j \in \{ 1,2,\ldots,k_{n} \}$, they define the random variables
\begin{eqnarray}
B_{ij} & = & \sum_{k=(i-1)w_{n}+1}^{iw_{n}-M}\sum_{l=(j-1)w_{n}+1}^{jw_{n}-M} \mathcal{A}_{kl}, \nonumber\\
D_{ij} & = & \sum_{k=(i-1)w_{n}+1}^{iw_{n}}\sum_{l=(j-1)w_{n}+1}^{jw_{n}} \mathcal{A}_{kl} - B_{ij}, \nonumber\\
F & = & \sum_{(k,l) \in \{1,\ldots,n\}^{2} - \{1,\ldots,w_{n}k_{n}\}^{2}} \mathcal{A}_{kl}. \nonumber
\end{eqnarray}
With these, \citet*{ayyala2017mean}  further decompose $\Delta_{1}$ as
\begin{eqnarray}
\Delta_{1} & = & \frac{\bar{X}_{n}^{\rm T}\bar{X}_{n} - \frac{1}{n} {\rm tr}\left(\Omega_{n}\right)}{\sqrt{\varmn}} = \frac{\sum_{i=1}^{n}\sum_{j=1}^{n} \mathcal{A}_{ij}}{\sqrt{\varmn}} \nonumber\\
& = & \frac{\sum_{i=1}^{k_{n}}\sum_{j=1}^{k_{n}} B_{ij}}{\sqrt{\varmn}} + \frac{\sum_{i=1}^{k_{n}}\sum_{j=1}^{k_{n}} D_{ij} + F}{\sqrt{\varmn}} = \Delta_{11}+\Delta_{12}, \label{eq:delta1Decomp}
\end{eqnarray}
and show that $\Delta_{11} \stackrel{d}{\rightarrow} N(0,1)$ and $\Delta_{12} \stackrel{p}{\rightarrow} 0$. While convergence of $\Delta_{12}$ can be established esasily, asymptotic normality of $\Delta_{11}$ is not straightforward.

\subsection{Revised proof} 

In \citet*{ayyala2017mean}, the authors  assumed that the blocks $B_{ij}$'s are independently distributed with unequal covariances. However, they are not independent even though  the blocks have zero covariance, ${\rm cov}(B_{ij}, B_{i' j'}) = 0$ if $(i,j) \neq (i',j')$. The dependence structure of the blocks should be taken into account in establishing limiting distribution. We provide a new proof for asymptotic normality of $\Delta_{11}$ which takes into account the dependence between the $B_{ij}$ blocks. 

\begin{proposition} \label{prop:main}
Under the assumption of Theorem \ref{thm:ayyala}, it holds that
\begin{equation} \label{eq:CLT2}
\Delta_{11} = \frac{\sum_{i=1}^{k_{n}}\sum_{j=1}^{k_{n}} B_{ij}}{\sqrt{{\rm Var}\left(M_{n}\right)}} \stackrel{d}{\rightarrow} N(0,1)
\end{equation}
as $n \rightarrow \infty$.
\end{proposition}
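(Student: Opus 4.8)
The plan is to establish \eqref{eq:CLT2} through the martingale central limit theorem \citep[Corollary 3.1]{hall2014martingale}, following the blocking strategy of \citet*{chen2010two}. The first step is to recast the double sum $\sum_{i,j=1}^{k_{n}}B_{ij}$ in a form amenable to a martingale decomposition. Writing $I_{i} = \{ (i-1)w_{n}+1,\ldots,iw_{n}-M \}$ for the $i$th trimmed window and $W_{i} = \sum_{t \in I_{i}} X_{t}$ for the corresponding block sum, the symmetry $\mathcal{A}_{kl} = \mathcal{A}_{lk}$ gives $B_{ij} = B_{ji}$, and since $\sum_{k,l \in I_{i}} \tr(\Gamma(k-l)) = \tr(\Psi)$ with $\Psi = {\rm Cov}(W_{i}) = \sum_{h \in \mathcal{M}}(w-|h|)\Gamma(h)$ and $w = w_{n}-M$, every block reduces to a quadratic form in the $W_{i}$: namely $B_{ij} = n^{-2} W_{i}^{\top} W_{j}$ for $i \neq j$ and $B_{jj} = n^{-2}(W_{j}^{\top}W_{j} - \tr(\Psi))$. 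The key structural gain is that consecutive windows are separated by a gap of at least $M+1$ indices, so that $M$-dependence together with Gaussianity makes $W_{1},\ldots,W_{k_{n}}$ mutually independent $N(0,\Psi)$ vectors under $H_{0}$; moreover, to leading order $\tr(\Psi^{2}) = w^{2}\tr(\Omega_{n}^{2})(1+o(1))$.

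Next I would group the double sum by its larger block index to obtain the decomposition $\sum_{i,j=1}^{k_{n}}B_{ij} = \sum_{j=1}^{k_{n}}Y_{j}$, where, with $S_{j-1} = \sum_{i<j}W_{i}$ and $S_{0}=0$,
\[ Y_{j} = \frac{1}{n^{2}}\left( W_{j}^{\top}W_{j} - \tr(\Psi) \right) + \frac{2}{n^{2}} S_{j-1}^{\top} W_{j}. \]
Taking $\mathcal{F}_{j} = \sigma(W_{1},\ldots,W_{j})$, I would verify that $\{ Y_{j}, \mathcal{F}_{j} \}_{j=1}^{k_{n}}$ is a mean-zero, square-integrable martingale difference array: because $W_{j}$ is independent of $\mathcal{F}_{j-1}$ and centred under $H_{0}$, the linear term satisfies $E[S_{j-1}^{\top}W_{j} \mid \mathcal{F}_{j-1}] = S_{j-1}^{\top}E[W_{j}] = 0$, while the quadratic term is centred by construction, so $E[Y_{j}\mid\mathcal{F}_{j-1}] = 0$. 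This is precisely the point at which the previously overlooked dependence among the $B_{ij}$ is correctly accommodated: the $W_{i}$ are genuinely independent, and the dependence within $\sum_{i,j}B_{ij}$ is carried entirely by the martingale structure rather than assumed away.

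It then remains to check the two hypotheses of Corollary 3.1 of \citet{hall2014martingale} for the normalized array $Y_{j}/\sqrt{\var(M_{n})}$. For the conditional variance, using $E[(W_{j}^{\top}W_{j} - \tr\Psi)W_{j}] = 0$ (an odd Gaussian moment) the cross term drops and one obtains $\sum_{j} E[Y_{j}^{2}\mid\mathcal{F}_{j-1}] = 2k_{n}n^{-4}\tr(\Psi^{2}) + 4 n^{-4}\sum_{j} S_{j-1}^{\top}\Psi S_{j-1}$. Its expectation equals $2k_{n}^{2}n^{-4}\tr(\Psi^{2})$, and the leading-order identities $\tr(\Psi^{2}) = w^{2}\tr(\Omega_{n}^{2})(1+o(1))$ and $k_{n}w = n(1+o(1))$ make this equal to $2n^{-2}\tr(\Omega_{n}^{2})(1+o(1))$, which matches $\var(M_{n})$ by \eqref{eq:varMn}. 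I would then show that the random part concentrates, i.e. $\var\big(\sum_{j} S_{j-1}^{\top}\Psi S_{j-1}\big) = o\big(n^{4}\tr^{2}(\Omega_{n}^{2})\big)$, so that the normalized conditional variance converges to $1$ in probability. For the Lyapunov condition I would bound $\var(M_{n})^{-2}\sum_{j}E[Y_{j}^{4}]$, which after expansion reduces to $\tr^{2}(\Psi^{2})$- and $\tr(\Psi^{4})$-type terms weighted by powers of $k_{n}$ and tends to $0$ for a suitable window exponent. Together these two checks deliver $\Delta_{11}\cd N(0,1)$.

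The main obstacle is the moment bookkeeping behind the concentration of the conditional variance and the Lyapunov bound. Both require expanding fourth moments of the block sums $W_{i}$, hence up to eighth-order Gaussian moments of the $X_{t}$, into traces of products $\Gamma(a)\Gamma(b)\Gamma(c)\Gamma(d)$ and into $\tr(\Psi^{4})$-type quantities. Assumption \ref{assump:trace} is calibrated exactly for this: each summand is $o((M+1)^{-4}\tr^{2}(\Omega_{n}^{2}))$ and there are $O((2M+1)^{4})$ lag tuples, so summation yields $\tr(\Omega_{n}^{4}) = o(\tr^{2}(\Omega_{n}^{2}))$, which is what forces the fluctuation of the conditional variance to be of smaller order than $\var(M_{n})^{2}$. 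Balancing the number of blocks $k_{n}$ against the trimmed buffer, so that the discarded mass $k_{n}M$ stays $o(n)$ while $\sum_{j}E[Y_{j}^{4}]$ remains negligible, is where the rate $M = O(n^{1/8})$ and the admissible range of the exponent $\alpha$ enter and pin down the choice $w_{n} = C n^{\alpha}$.
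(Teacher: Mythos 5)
Your proposal is correct, and it runs on the same engine as the paper's proof---the martingale CLT of \citet[Corollary 3.1]{hall2014martingale} applied to Chen--Qin-style block quadratic forms---but your decomposition is genuinely different. The paper splits $\Delta_{11}$ into an off-diagonal part $\Delta_{111} \propto \sum_{i \neq j} B_{ij}$ and a diagonal part $\Delta_{112} \propto \sum_{i} B_{ii}$: its martingale array $V_{nj} = \sum_{i<j}\phi_{nij}$ (Lemmas \ref{lem:martigale}--\ref{lem:condLindeberg}) contains only the cross terms $Y_i^{\top}Y_j$, and the diagonal blocks are disposed of separately via a classical CLT together with the bound $k_n^2 \var(B_{11})/\{k_n \var(M_n)\} \rightarrow 0$. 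You instead fold the centered diagonal blocks $B_{jj} = n^{-2}(W_j^{\top}W_j - \tr\Psi)$ into the $j$th martingale difference, so one application of the martingale CLT handles the whole double sum; the price is that your conditional-variance and Lyapunov computations carry the extra $W_j^{\top}W_j - \tr\Psi$ terms (they enter at relative orders $1/k_n$ and $k_n^{-3}$, so they are harmless), and the gain is that no separate negligibility argument for the diagonal sum is needed. One step you assert rather than prove deserves care: the claim $\tr(\Psi^{2}) = w^{2}\tr(\Omega_{n}^{2})(1+o(1))$, equivalently $\tr(\Omega_{w_n}^{2})/\tr(\Omega_{n}^{2}) \rightarrow 1$, is not automatic. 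In the paper this is exactly the step \eqref{eqn:inequality}, which requires Lemma C.1 of \citet{ayyala2017mean} and a lower bound on the window width such as $w_n \geq (M+1)\sqrt{n}$; without it, normalizing by $\var(M_n)$ instead of by $2k_n^2 n^{-4}\tr(\Psi^2)$ is unjustified, so you should make this comparison (and the resulting constraint on your exponent $\alpha$) explicit. Your treatment of the concentration step, reducing $\var\bigl(\sum_j S_{j-1}^{\top}\Psi S_{j-1}\bigr)$ to $\tr(\Omega_{w_n}^4) = o(\tr^2(\Omega_{w_n}^2))$ by summing the $O((2M+1)^4)$ lag tuples under assumption \ref{assump:trace}, is precisely how the paper's Lemma \ref{lem:condvar} works.
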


\begin{proof}
For $i=1,\ldots,k_{n}$, let
\begin{equation} \nonumber
Y_{i} = \frac{1}{w_{n}-M}\sum_{k=(i-1)w_{n}+1}^{iw_{n}-M}X_{k}.
\end{equation}
Then, $Y_{1},\ldots,Y_{k_{n}}$ are independent and identically distributed Gaussian variables with mean vector 0 and covariance matrix ${\rm Cov}(Y_{i}) = \frac{1}{w_{n}-M} \Omega_{w_{n}}$ where $\Omega_{w_{n}} = \sum_{h \in \mathcal{M}}\left(1-\frac{|h|}{w_{n}-M}\right)\Gamma(h)$. We note that for $i \neq j$,
\begin{equation} \nonumber
B_{ij} = \sum_{k=(i-1)w_{n}+1}^{iw_{n}-M}\sum_{l=(j-1)w_{n}+1}^{jw_{n}-M}X_{k}^{\rm T}X_{l} =
 {{(w_{n}-M)^{2}}Y_{i}^{\rm T}Y_{j}.}
\end{equation}
Following the proof of Theorem 1 of \citet*{chen2010two}, define
\begin{eqnarray}
\phi_{nij} & = & \frac{(w_{n}-M)^{2}}{n^{2}} Y_{i}^{\rm T}Y_{j} \quad \mbox{for } i<j, \nonumber\\
V_{nj} & = & \sum_{i=1}^{j-1} \phi_{nij} \quad \mbox{for } j = 2,\ldots,k_{n}, \nonumber\\
S_{nm} & = & \sum_{j=2}^{m} V_{nj} \quad \mbox{for } m = 2,\ldots,k_{n}
\end{eqnarray}
and let $\mathcal{F}_{nm}  =  \sigma(Y_{1},\ldots,Y_{m})$ for  $m = 2,\ldots,k_{n}$ be the $\sigma$-algebra generated by $Y_{1},\ldots,Y_{m}$. Then we have  
\begin{equation} \nonumber
\sum_{i \neq j} B_{ij} = 2S_{nk_{n}}.
\end{equation}
To show the asymptotic normality of $\sum_{i \neq j} B_{ij}$, we need some lemmas.

\begin{lemma}[square integrable martingale with zero mean] \label{lem:martigale}
For each $n$, $(S_{nm},\mathcal{F}_{nm})_{m=2}^{k_{n}}$ is the sequence of zero mean and a square integrable martingale.
\end{lemma}

\begin{proof}[Proof of lemma \ref{lem:martigale}]
Since $Y_{i}$'s are i.i.d. Gaussian random variables with zero mean, $S_{nm}$ has zero mean and is square integrable for any $n, m$. We observe
\begin{eqnarray}
\mathrm{E}\left(S_{n(m+1)} | \mathcal{F}_{nm}\right) & = & \sum_{j=2}^{m+1} \sum_{i=1}^{j-1} \frac{(w_{n}-M)^{2}}{n^{2}} \mathrm{E}\left(Y_{i}^{\rm T}Y_{j} | \mathcal{F}_{nm}\right) \nonumber\\
& = & \sum_{j=2}^{m} \sum_{i=1}^{j-1} \frac{(w_{n}-M)^{2}}{n^{2}} Y_{i}^{\rm T}Y_{j} + \sum_{i=1}^{m} \frac{(w_{n}-M)^{2}}{n^{2}} Y_{i}^{\rm T}\mathrm{E}\left(Y_{m+1} | \mathcal{F}_{nm}\right) \nonumber\\
& = & S_{nm}. \nonumber
\end{eqnarray}
Therefore, $(S_{nm},\mathcal{F}_{nm})_{m=2}^{k_{n}}$ is a 
zero mean and square integrable martingale sequence.
\end{proof}

\begin{lemma}[analogous condition on the conditional variance] \label{lem:condvar}
Let $\sigma_{n}^{2} = \frac{2k_{n}(k_{n}-1)(w_{n}-M)^{2}}{n^{4}} {\rm tr}\left(\Omega_{w_{n}}^{2}\right)$. Then,
\begin{equation} \nonumber
\frac{1}{\sigma_{n}^{2}} \sum_{j=2}^{k_{n}}{\rm E}\left[V_{nj}^{2}|\mathcal{F}_{n(j-1)}\right] \stackrel{p}{\rightarrow} \frac{1}{4}.
\end{equation}
\end{lemma}

\begin{proof}[Proof of lemma \ref{lem:condvar}]
We note that
\begin{eqnarray}
{\rm E}\left[V_{nj}^{2}|\mathcal{F}_{n(j-1)}\right] & = & \frac{(w_{n}-M)^{4}}{n^{4}} {\rm E}\left[\left.\left( \sum_{i=1}^{j-1}Y_{i}^{\rm T}Y_{j} \right)^{2}\right|\mathcal{F}_{n(j-1)}\right] \nonumber\\
& = & \frac{(w_{n}-M)^{4}}{n^{4}} \sum_{i_{1},i_{2}=1}^{j-1} Y_{i_{1}}^{\rm T} {\rm E}\left[Y_{j}Y_{j}^{\rm T}|\mathcal{F}_{n(j-1)}\right] Y_{i_{2}} \nonumber\\
& = & \frac{(w_{n}-M)^{3}}{n^{4}} \sum_{i_{1},i_{2}=1}^{j-1} Y_{i_{1}}^{\rm T} \Omega_{w_{n}} Y_{i_{2}}. \nonumber
\end{eqnarray}
Define $\eta_{n} = \sum_{j=2}^{k_{n}}{\rm E}\left[V_{nj}^{2}|\mathcal{F}_{n(j-1)}\right]$, then its first moment is
\begin{eqnarray}
{\rm E}\left( \eta_{n} \right) & = & \frac{(w_{n}-M)^{3}}{n^{4}} \sum_{j=2}^{k_{n}} \sum_{i_{1},i_{2}=1}^{j-1} {\rm E}\left( Y_{i_{1}}^{\rm T} \Omega_{w_{n}} Y_{i_{2}} \right) \nonumber\\
& = & \frac{(w_{n}-M)^{2}}{n^{4}} \sum_{j=2}^{k_{n}} (j-1){\rm tr}\left(\Omega_{w_{n}}^{2}\right) \nonumber\\
& = & \frac{k_{n}(k_{n}-1)(w_{n}-M)^{2}}{2n^{4}} {\rm tr}\left(\Omega_{w_{n}}^{2}\right) = \frac{1}{4}\sigma_{n}^{2}. \nonumber
\end{eqnarray}
The second moment of $\eta_{n}$ is
\begin{eqnarray}
{\rm E}\left( \eta_{n}^{2} \right) & = & \frac{(w_{n}-M)^{6}}{n^{8}} \sum_{j_{1},j_{2}=2}^{k_{n}} \sum_{i_{1},i_{2}=1}^{j_{1}-1}\sum_{i_{3},i_{4}=1}^{j_{2}-1} {\rm E}\left( Y_{i_{1}}^{\rm T} \Omega_{w_{n}} Y_{i_{2}}Y_{i_{3}}^{\rm T} \Omega_{w_{n}} Y_{i_{4}} \right) \nonumber\\
& = & \frac{(w_{n}-M)^{6}}{n^{8}} \sum_{j_{1},j_{2}=2}^{k_{n}} (j_{1}-1)^{2}(j_{2}-1)^{2} {\rm E}\left( \bar{Y}_{j_{1}}^{\rm T} \Omega_{w_{n}} \bar{Y}_{j_{1}}\bar{Y}_{j_{2}}^{\rm T} \Omega_{w_{n}} \bar{Y}_{j_{2}} \right) \nonumber\\
& = & \frac{(w_{n}-M)^{4}}{n^{8}} \sum_{j_{1},j_{2}=2}^{k_{n}} (j_{1}-1)^{2}(j_{2}-1)^{2} \left[ \frac{2(j_{1} \wedge j_{2} - 1)^{2}}{(j_{1}-1)^{2}(j_{2}-1)^{2}}{\rm tr}\left(\Omega_{w_{n}}^{4}\right) + \frac{1}{(j_{1}-1)(j_{2}-1)}{\rm tr}^{2}\left(\Omega_{w_{n}}^{2}\right) \right] \nonumber\\
& = & \frac{(w_{n}-M)^{4}}{n^{8}} \left[ \frac{k_{n}^{2}(k_{n}-1)(k_{n}+1)}{6}{\rm tr}\left(\Omega_{w_{n}}^{4}\right) + \frac{k_{n}^{2}(k_{n}-1)^{2}}{4}{\rm tr}^{2}\left(\Omega_{w_{n}}^{2}\right) \right], \nonumber
\end{eqnarray}
and thus the variance of $\eta_{n}$ is
\begin{equation} \nonumber
{\rm Var}\left( \eta_{n} \right) = \frac{k_{n}^{2}(k_{n}-1)(k_{n}+1)(w_{n}-M)^{4}}{6n^{8}} {\rm tr}\left(\Omega_{w_{n}}^{4}\right).
\end{equation}
{By assumption (A5), it holds that ${\rm tr}\left(\Omega_{w_{n}}^{4}\right) = o\left( {\rm tr}^{2}\left(\Omega_{w_{n}}^{2}\right) \right)$.} Thus, we have
\begin{equation} \nonumber
\frac{{\rm Var}\left( \eta_{n} \right)}{\sigma_{n}^{4}} = \frac{k_{n}+1}{24(k_{n}-1)} \frac{{\rm tr}\left(\Omega_{w_{n}}^{4}\right)}{{\rm tr}^{2}\left(\Omega_{w_{n}}^{2}\right)} = o(1).
\end{equation}
This completes the proof.
\end{proof}

\begin{lemma}[conditional Lindeberg condition] \label{lem:condLindeberg}
For any $\epsilon > 0$,
\begin{equation} \nonumber
\frac{1}{\sigma_{n}^{2}} \sum_{j=2}^{k_{n}}{\rm E}\left[V_{nj}^{2}I\left(|V_{nj}|>\epsilon\sigma_{n}\right)|\mathcal{F}_{n(j-1)}\right] \stackrel{p}{\rightarrow} 0.
\end{equation}
\end{lemma}

\begin{proof}[Proof of lemma \ref{lem:condLindeberg}]
We note that
\begin{equation} \nonumber
\frac{1}{\sigma_{n}^{2}} \sum_{j=2}^{k_{n}}{\rm E}\left[V_{nj}^{2}I\left(|V_{nj}|>\epsilon\sigma_{n}\right)|\mathcal{F}_{n(j-1)}\right] \le \frac{1}{\epsilon^{2}\sigma_{n}^{4}} \sum_{j=2}^{k_{n}}{\rm E}\left[V_{nj}^{4}|\mathcal{F}_{n(j-1)}\right].
\end{equation}
It suffices to show
\begin{equation} \nonumber
{\rm E} \left[ \sum_{j=2}^{k_{n}}{\rm E}\left[V_{nj}^{4}|\mathcal{F}_{n(j-1)}\right] \right] = o(\sigma_{n}^{4}).
\end{equation}
We have that
\begin{eqnarray}
{\rm E} \left[ \sum_{j=2}^{k_{n}}{\rm E}\left[V_{nj}^{4}|\mathcal{F}_{n(j-1)}\right] \right] & = & \sum_{j=2}^{k_{n}} {\rm E}\left(V_{nj}^{4}\right) =  \sum_{j=2}^{k_{n}} {\rm E}\left( \frac{(w_{n}-M)^{2}}{n^{2}}\sum_{i=1}^{j-1}Y_{i}^{\rm T}Y_{j} \right)^{4}\nonumber\\
& = & \frac{(w_{n}-M)^{8}}{n^{8}} \sum_{j=2}^{k_{n}} \left[ \frac{6(j-1)^{2}}{(w_{n}-M)^{4}}{\rm tr}\left( \Omega_{w_{n}}^{4} \right) + \frac{3(j-1)^{2}}{(w_{n}-M)^{4}}{\rm tr}^{2}\left( \Omega_{w_{n}}^{2} \right) \right] \nonumber\\
& = & \frac{k_{n}(k_{n}-1)(2k_{n}-1)(w_{n}-M)^{4}}{2n^{8}} \left( 2{\rm tr}\left( \Omega_{w_{n}}^{4} \right) + {\rm tr}^{2}\left( \Omega_{w_{n}}^{2} \right) \right) \nonumber\\
& = & o(\sigma_{n}^{4}). \nonumber
\end{eqnarray}
This completes the proof.
\end{proof}
We  have thus established the sufficient conditions for martingale Central Limit Theorem. By the Corollary 3.1 of \citet*{hall2014martingale}, Lemmas \ref{lem:martigale}, \ref{lem:condvar} and \ref{lem:condLindeberg}, 
\begin{equation} \nonumber
\Delta_{111} = \frac{\sigma_{n}}{\sqrt{{\rm Var}\left(M_{n}\right)}}\frac{\sum_{i \neq j}B_{ij}}{\sigma_{n}} \stackrel{d}{\rightarrow} N(0,1).
\end{equation}
We define  
\begin{eqnarray}
a_n \asymp b_n
\label{eqn:asymp}
\end{eqnarray}
 if $ a_n =b_n(1+o(1))$ for $a_n>0$ and $b_n>0$.
Note that  $\frac{\sigma_n}{\sqrt{Var(M_n)}} \rightarrow 1$ since 
	$\frac{\sigma_n^2}{Var(M_n)}  -1 \asymp \frac{ tr(\Omega_{w_n}^2)   - tr(\Omega_{n}^2)}{tr(\Omega_n^2)}$ and 
\begin{eqnarray}
  \frac{ |tr(\Omega_{w_n}^2)   - tr(\Omega_{n}^2)|}{tr(\Omega_n^2)} &=& \frac{|\sum_{|i|,|j| \leq M} \left( -\frac{2|i|}{w_n-M} + \frac{2|j|}{n}  + \frac{|i||j|}{(w_n-M)^2}  - \frac{|i||j|}{n^2}  \right) tr(\Gamma(i) \Gamma(j))| }{tr(\Omega_n^2)}     \nonumber  \\
 & \leq  &   4 \frac{4(M+1)^3}{w_n(1+o(1))} \sqrt{n} \frac{1}{(M+1)^2}  \frac{o(tr(\Omega_n^2))}{tr(\Omega_n^2)}    \label{eqn:inequality} \\
  &=& o(1)  \label{eqn:equality}  
\end{eqnarray}  
where $\leq$ in \eqref{eqn:inequality} is due to  $\Gamma(i)\Gamma(j) = o(\sqrt{n} tr(\Omega_n^2))$ from Lemma C.1 in the appendix of \citet{ayyala2017mean} and the equality in \eqref{eqn:equality} is obtained by taking   $w_n  \geq  (M+1) \sqrt{n}$. 
Also,  by the classical central limit theorem, it holds that $\frac{\sum_{i=1}^{k_{n}} B_{ii}}{\sqrt{k_{n}{\rm Var}(B_{11})}} \stackrel{d}{\rightarrow} N(0,1)$. Since
\begin{equation} \nonumber
{\rm Var}(B_{11}) = \frac{2(w_{n}-M)^{2}}{n^{4}} {\rm tr}\left( \Omega_{w_{n}}^{2} \right),
\end{equation}
we have
\begin{equation} \nonumber
\Delta_{112} = \frac{1}{\sqrt{k_{n}}} \sqrt{ \frac{k_{n}^{2}{\rm Var}(B_{11})}{{\rm Var}\left(M_{n}\right)} } \frac{\sum_{i=1}^{k_{n}} B_{ii}}{\sqrt{k_{n}{\rm Var}(B_{11})}} \stackrel{p}{\rightarrow} 0.
\end{equation}
Therefore, we get the desired result 
\begin{equation} \nonumber
\Delta_{11} = \Delta_{111}+\Delta_{112} \stackrel{d}{\rightarrow} N(0,1).
\end{equation}
\end{proof}

\section{Two Sample Test}
\label{sec:twosamptest} 
 By construction of the blocks, we have $ \frac{w_{n_g} -M }{n_g} \asymp  \frac{1}{k_{n_g}}$ from ${n_g} = k_{n_g} w_{n_g} + r_{n_g}$ with 
$w_{n_g}$ satisfying $M=o(w_{n_g})$ for $g=1,2$ 
where $\asymp$ is defined in \eqref{eqn:asymp}.   
 As done in one sample problem, if we define 
\begin{equation} \nonumber
	B_{gij} = \sum_{k=(i-1)w_{n_g}+1}^{iw_{n_g}-M}\sum_{l=(j-1)w_{n_g}+1}^{jw_{n_g}-M}X_{gk}^{\top}X_{gl} = (w_{n_g}-M)^{2}Y_{gi}^{\top}Y_{gj}, 
\end{equation}
where $Y_{gi} = \frac{1}{w_{n_g} - M}\sum_{k=(i-1)w_{n_g}+1}^{iw_{n_g}-M} X_{gk}$,  
then  two sample test statistic is 
\begin{align*}
	M_n = & (\bar X_1 - \bar X_2)'(\bar X_1 -\bar X_2)  - \frac{1}{n_1} tr( \widehat{\Omega_{n_1}^{(1)}}) -\frac{1}{n_2} tr( \widehat{\Omega_{n_2}^{(2)}})\\
	= & \frac{(w_{n_1}-M)^{2}}{n_1^{2}} \sum_{i \neq j} Y_{1i}^{\top}Y_{1j} +  \frac{(w_{n_2}-M)^{2}}{n_2^{2}}  \sum_{i\neq j} Y_{2i}^{\top}Y_{2j} -  2  \frac{(w_{n_1}-M)(w_{n_2}-M) }{n_1n_2} \sum_{1\leq i \leq k_{n_1}}\sum_{1\leq j \leq k_{n_2}}Y_{1i}^{\top}Y_{2j}     \\
	& + {\cal R}_n\\
	= & \underbrace{ \left\{ \frac{1}{k_{n_1}(k_{n_1}-1)} \sum_{i \neq j} Y_{1i}^{\top}Y_{1j} +  \frac{1}{k_{n_2}(k_{n_2}-1)}  \sum_{i\neq j} Y_{2i}^{\top}Y_{2j} -  2  \frac{1}{k_{n_1}k_{n_2}} \sum_{1\leq i \leq k_{n_1}}\sum_{1\leq j \leq k_{n_2}}Y_{1i}^{\top}Y_{2j} \right\} }_{ T_n }   (1+o_p(1))    \\
	&+ {\cal R}_n  \\
	=& T_n(1+o_o(1)) +{\cal R}_n 
\end{align*}
where  
\begin{eqnarray}
	{\cal R}_n =  \sum_{i=1}^{k_{n_1}} B_{1ii} 
	+  \sum_{i=1}^{k_{n_2}} B_{2ii} + \Delta_{12}^{(1)} + \Delta_{12}^{(2)}
	\label{eqn:Rn}
\end{eqnarray} 
and $\Delta_{12}^{(l)}$ for $l=1,2$ is defined as in one sample case. 

We change the condition (14) in Ayyala et al. (2017) to the following:  
\begin{eqnarray}
tr(\Gamma^{(w_1)}(a)\Gamma^{(w_2)}(b)\Gamma^{(w_3)}(c)\Gamma^{w_4}(d))
= o((M+1)^{-4}  tr^2(  (\Omega^{(1)}_{n_1} + \Omega^{(2)}_{n_2})^2) )
\end{eqnarray}

As in one sample case,  we can show  
\begin{eqnarray}
	\frac{{\cal R}_n }{\sqrt{ \varmn }} \stackrel{p}{\rightarrow} 0.
\end{eqnarray}

We now prove the asymptotic normality of $T_n$. 
Since $Y_{11}, Y_{12},\ldots, Y_{1 k_{n_1}} $ and $Y_{21}, Y_{22},\ldots, Y_{2 k_{n_2}}$ are $k_{n_1} + k_{n_2}$ observational vectors  
with $E(Y_{gi}) =\mu_{g}$ and 
$Cov(Y_{gi}) = \frac{1}{(w_{n_g}-M)} \Omega^{(g)}_{w_{n_g}}$ for $g=1,2$, 
   we can  apply the martingale C.L.T. in Chen and Qin (2010) to independent $k_{n_1} + k_{n_2}$ observations. Therefore we obtain     
\begin{eqnarray}
	\frac{T_n}{\sqrt{{\rm var}(T_n)}} \stackrel{d}{\rightarrow} N(0,1)
	\label{eqn:Tn}
\end{eqnarray}
where 
\begin{align}
	{\rm var}(T_n) &=   \frac{2}{k_{n_1} (k_{n_1}-1)}  \frac{1}{(w_{n_1}  -M)^2} tr((\Omega^{(1)}_{w_{n_1}})^2)  
	+	\frac{2}{k_{n_2} (k_{n_2}-1)}  \frac{1}{(w_{n_2}  -M)^2} tr((\Omega^{(2)}_{w_{n_2}})^2) \nonumber \\
	 & +  \frac{4}{k_{n_1} k_{n_2}}  \frac{1}{(w_{n_1}  -M)(w_{n_1}  -M )} tr(\Omega^{(1)}_{w_{n_1}}\Omega^{(2)}_{w_{n_2}}   ) \nonumber \\
	&\asymp    \frac{2}{{n_1} ({n_1}-1)}   tr((\Omega^{(1)}_{w_{n_1}})^2)   	+
	\frac{2}{{n_2} ({n_2}-1)}  tr((\Omega^{(2)}_{w_{n_1}})^2) +  \frac{4}{{n_1}{n_2}}   tr(\Omega^{(1)}_{w_{n_1}}\Omega^{(2)}_{w_{n_2}} )             
\end{align}
since $ M=o(w_{n_g})$ and $ k_{n_g} w_{n_g} \asymp n_g$.    
Furthermore,  we can also show the asymptotic equivalence of ${\rm var}(T_n)$ and $\varmn$ which is stated in the following lemma. 

\begin{lemma}
Under the conditions given in Ayyala et al. (2017),  we have $ {\rm var}(T_n) \asymp \varmn $.  
\label{lemma:ratioVarMnTn}	
\end{lemma}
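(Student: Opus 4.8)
The plan is to reduce the lemma to a comparison of the three nonnegative trace quantities that appear in both $\var(T_n)$ and $\varmn$. First I would record the leading-order form of $\varmn$: since the two groups are independent and Gaussian, under $H_{0}$ the statistic $M_n$ behaves to leading order like a centred quadratic form in $\bar X_1 - \bar X_2$, whose covariance is $\frac{1}{n_1}\Omega^{(1)}_{n_1} + \frac{1}{n_2}\Omega^{(2)}_{n_2}$. Hence
\[
\varmn \asymp \frac{2}{n_1^2}\tr\bigl((\Omega^{(1)}_{n_1})^2\bigr) + \frac{4}{n_1 n_2}\tr\bigl(\Omega^{(1)}_{n_1}\Omega^{(2)}_{n_2}\bigr) + \frac{2}{n_2^2}\tr\bigl((\Omega^{(2)}_{n_2})^2\bigr),
\]
which has exactly the same three-term shape as the displayed expression for $\var(T_n)$, the only differences being that $\Omega^{(g)}_{w_{n_g}}$ replaces $\Omega^{(g)}_{n_g}$ and that $n_g(n_g-1)$ replaces $n_g^2$. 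Because $n_g(n_g-1) = n_g^2(1+o(1))$, the scaling factors already agree up to $1+o(1)$, so it remains to control $\var(T_n) - \varmn$ relative to $\varmn$, term by term.

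My second step records three facts used repeatedly. All three trace quantities are nonnegative, since each $\Omega^{(g)}_{n_g}$ and $\Omega^{(g)}_{w_{n_g}}$ is a (scaled) covariance matrix and hence positive semidefinite, so in particular $\tr(\Omega^{(1)}_{n_1}\Omega^{(2)}_{n_2}) \ge 0$. Consequently $\varmn$ is bounded below by each of its diagonal terms, $\varmn \ge \frac{2}{n_g^2}\tr((\Omega^{(g)}_{n_g})^2)$, and by the arithmetic--geometric mean inequality also by $\frac{4}{n_1 n_2}\bigl\{\tr((\Omega^{(1)}_{n_1})^2)\,\tr((\Omega^{(2)}_{n_2})^2)\bigr\}^{1/2}$. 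The two diagonal differences are then dispatched by the one-sample estimate already proved in \eqref{eqn:inequality}--\eqref{eqn:equality}, applied separately to each group $g$ with $w_{n_g} \ge (M+1)\sqrt{n_g}$: dividing $\frac{2}{n_g^2}\bigl|\tr((\Omega^{(g)}_{w_{n_g}})^2) - \tr((\Omega^{(g)}_{n_g})^2)\bigr|$ by the lower bound $\frac{2}{n_g^2}\tr((\Omega^{(g)}_{n_g})^2)$ reduces it exactly to the one-sample ratio, which is $o(1)$.

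The main step is the cross term. Expanding both block- and full-sample versions in the autocovariances gives
\[
\tr\bigl(\Omega^{(1)}_{w_{n_1}}\Omega^{(2)}_{w_{n_2}}\bigr) - \tr\bigl(\Omega^{(1)}_{n_1}\Omega^{(2)}_{n_2}\bigr) = \sum_{|i|,|j| \le M} c_{ij}\,\tr\bigl(\Gamma^{(1)}(i)\Gamma^{(2)}(j)\bigr),
\]
where the coefficients $c_{ij}$, being differences of Ces\`aro weights, satisfy $|c_{ij}| = O(M/w_{n_1} + M/w_{n_2})$ uniformly. To bound the summands I would combine the cross-sample Cauchy--Schwarz inequality
\[
\bigl|\tr\bigl(\Gamma^{(1)}(i)\Gamma^{(2)}(j)\bigr)\bigr| \le \bigl\{\tr\bigl(\Gamma^{(1)}(i)\Gamma^{(1)}(-i)\bigr)\,\tr\bigl(\Gamma^{(2)}(j)\Gamma^{(2)}(-j)\bigr)\bigr\}^{1/2}
\]
with the revised trace condition for the two-sample problem and the Lemma C.1 bound of \citet{ayyala2017mean}. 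Dividing the resulting estimate of $\frac{4}{n_1 n_2}$ times this difference by the geometric-mean lower bound on $\varmn$ from the previous paragraph, and again taking $w_{n_g} \ge (M+1)\sqrt{n_g}$, makes this contribution $o(1)$, mirroring the computation in \eqref{eqn:inequality}--\eqref{eqn:equality}. Summing the three $o(1)$ contributions yields $\var(T_n) = \varmn(1+o(1))$, i.e.\ $\var(T_n) \asymp \varmn$.

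I expect the cross term to be the principal obstacle. The one-sample Lemma C.1 controls $\tr(\Gamma(i)\Gamma(j))$ within a single stationary process, whereas the cross-sample trace $\tr(\Gamma^{(1)}(i)\Gamma^{(2)}(j))$ mixes the two autocovariance structures; this is precisely the quantity the revised trace condition is designed to handle, and the argument breaks if the original condition (14) is used. Additional care is needed because $n_1$ and $n_2$ may diverge at different rates, so the lower bound on $\varmn$ controlling the cross difference must be taken as the geometric mean of the two diagonal terms rather than either one individually; the nonnegativity of all three traces is what makes this lower bound, and hence the whole term-by-term comparison, legitimate.
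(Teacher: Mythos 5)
Your proposal is correct under the paper's standing assumptions, and it shares the paper's skeleton while organizing the estimates differently. Both arguments expand $\var(T_n)-\varmn$ into Ces\`aro-weight differences of size $O(M/w_{n_g})$ multiplying traces $\tr(\Gamma^{(g)}(i)\Gamma^{(g')}(j))$, bound those traces through Lemma C.1 of \citet{ayyala2017mean} and the trace condition, and finish by taking $w_{n_g}$ large. The paper, however, assumes from the outset that $n_1$ and $n_2$ have the same order, uses nonnegativity of $\tr(\Omega^{(1)}_{n_1}\Omega^{(2)}_{n_2})$ to collapse $\varmn$ onto the single common denominator $n^{-2}\tr((\Omega^{(1)}_{n_1}+\Omega^{(2)}_{n_2})^2)$, and bounds all three differences at once against that scale (including the cross-sample trace, handled directly by the revised condition (14)), concluding with $w_n=[n^{\alpha}]$, $\tfrac{7}{8}\le\alpha<1$. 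You instead compare term by term: the diagonal differences against the matching diagonal lower bounds on $\varmn$, which reduces them verbatim to the one-sample estimate \eqref{eqn:inequality}--\eqref{eqn:equality}, and the cross difference against the AM--GM (geometric-mean) lower bound after the Cauchy--Schwarz factorization $|\tr(\Gamma^{(1)}(i)\Gamma^{(2)}(j))|\le\{\tr(\Gamma^{(1)}(i)\Gamma^{(1)}(-i))\,\tr(\Gamma^{(2)}(j)\Gamma^{(2)}(-j))\}^{1/2}$. That factorization is the genuinely different ingredient: it converts the cross-sample trace into own-sample traces, so for this particular lemma your route needs only the per-group version of \ref{assump:trace} and not the revised cross-sample condition at all, whereas the paper leans on the revised condition here.

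Two caveats. First, your diagonal reduction and your post--Cauchy--Schwarz bounds require the one-sample condition \ref{assump:trace} to hold for each group relative to its own $\tr((\Omega^{(g)}_{n_g})^2)$; this is part of the assumptions of \citet{ayyala2017mean}, but it does not follow from the revised cross-sample condition (whose right-hand side involves the larger quantity $\tr^2((\Omega^{(1)}_{n_1}+\Omega^{(2)}_{n_2})^2)$), so it should be invoked explicitly. Second, the claimed robustness to $n_1,n_2$ diverging at different rates is overstated: with $w_{n_g}\ge (M+1)\sqrt{n_g}$, your cross-term estimate still carries a factor of order $(n_1/n_2)^{1/4}+(n_2/n_1)^{1/4}$, since the uniform $o(1)$ from the trace condition cannot absorb an arbitrarily diverging ratio. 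So, like the paper's proof, your argument effectively requires $n_1/n_2$ bounded away from $0$ and $\infty$ --- the geometric-mean lower bound relaxes, but does not remove, the comparable-sample-size requirement.
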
 
\begin{proof}
		 By using  the following facts: 
\begin{itemize}
	\item  when $n_1$ and $n_2$ have the same order except constant factor,     $ \varmn \asymp  tr( \frac{\Omega_{n_1}}{n_1} + \frac{\Omega_{n_2}}{n_2})^2$ has the same order as 
	$\frac{1}{n^2} tr(({\Omega_{n_1}} +{\Omega_{n_2}})^2) $ except some constant factor. This can be show using  $tr(\Omega_{n_l}^2) >0$ for $l=1,2$ and $tr(\Omega_{n_1} \Omega_{n_2})\geq 0$         
	
	\item when $p=O(n)$,   $\Gamma(i)\Gamma(j) = \sqrt{p} o( tr( \Omega_{n_1} + \Omega_{n_2} )^2  ) $ in Lemma C.1 in Appendix in Ayyala et al. (2017),   
\end{itemize}
 we can derive   
\begin{eqnarray*}
	\frac{{\rm var}(T_n)}{\varmn} - 1 &=& 
	\frac{ \sum_{|i|, |j| \leq M} (\frac{|i|}{w_n-M} +  \frac{|j|}{n_1}  - \frac{|i||j|}{(w_n -M)n_1}) tr(\Gamma^{(1)}(i)\Gamma^{(1)}(j)) }{Var(T_n)} \\
	&&+  \frac{ \sum_{|i|, |j| \leq M} (\frac{|i|}{w_n-M} +  \frac{|j|}{n_2}  - \frac{|i||j|}{(w_n -M)n_2}) tr(\Gamma^{(2)}(i)\Gamma^{(2)}(j)) }{Var(T_n)} \\
	&&+  \frac{ \sum_{|i|, |j| \leq M} (\frac{|i|}{w_n-M} +  \frac{|j|}{n_1}  - \frac{|i||j|}{(w_n -M)n_1}) tr(\Gamma^{(1)}(i)\Gamma^{(2)}(j)) }{Var(T_n)} \\
	&=&  \frac{M^3}{w_n} \sqrt{p} \frac{ o( tr( \Omega^{(1)}_{n_1} + \Omega^{(2)}_{n_2} )^2  )}{   tr( \Omega^{(1)}_{n_1} + \Omega^{(2)}_{n_2} )^2}  = \sqrt{p} \frac{M^3}{w_n} o( 1  )  \\
	&=&  o(1)   
\end{eqnarray*}  
where the last equality holds if $ w_n = [n^{\alpha}] $ for $ \frac{7}{8} \leq \alpha <1$.
\end{proof}

Combining \eqref{eqn:Rn}, \eqref{eqn:Tn} and Lemma \ref{lemma:ratioVarMnTn}, we have 
\begin{eqnarray}
	\frac{M_n}{\sqrt{\varmn}} =  \sqrt{\frac{{\rm var}(T_n)}{\varmn}} \frac{T_n}{\sqrt{{\rm var}(T_n)} }
	+ \frac{{\cal R}_n}{\sqrt{\varmn}} \stackrel{d}{\rightarrow} N(0,1). 
	\label{eqn:normality}
\end{eqnarray}
Additionally, since we have $\frac{\widehat{\varmn}}{\varmn} \stackrel{p}{\rightarrow} 1$ from \citet{ayyala2017mean}, we finally have the asymptotic normality of the proposed test for two sample case in \citet{ayyala2017mean}:   
\begin{eqnarray}
	\frac{M_n}{\sqrt{\widehat{\varmn}}} \stackrel{p}{\rightarrow} N(0,1). 
\end{eqnarray}

\section{Asymptotic power}
\label{sec:power}

To calculate the asymptotic power of $\mathcal{T}_n$ under the local alternative condition in equation (11), define $X _t = \mu + Y_t, t = 1, \ldots, n$ where $Y_t$ has mean zero and autocovariance structure given by $\{\Gamma(h)\}$. Define the test statistic calculated using $\mathbf{X}=\{X_1, \ldots, X_n\}$ and $\mathbf{Y}=\{Y_1, \ldots, Y_n\}$ as $\mathcal{T}_n(\mathbf{X})$ and $\mathcal{T}_n(\mathbf{Y})$ respectively. Similarly, we also define  $\mtxt{M}(\mathbf{X})$ 
and $\mtxt{M}(\mathbf{Y})$ corresponding to $\mtxt{M}$  defined in Section 3.    By Theorem 3.1, we have asymptotic normality of  $\mathcal{T}_n(\mathbf{Y})$,
\begin{equation*}
T_{new}(\mathbf{Y}) = \frac{ \mathcal{M}_n (\mathbf{Y})}{ \sqrt{ \widehat{ {\rm var}\left(\mathcal{M}_n (\mathbf{Y}) \right)}} } \stackrel{d}{\rightarrow} \mathcal{N}(0,1).
\end{equation*}
For any non-zero $\mu$ satisfying the local alternative condition, we have
\begin{align*}
{\rm T}_{new}(\mathbf{X}) & = \frac{ \mathcal{M}_n (\mathbf{X})}
{ \sqrt{ \widehat{ {\rm var}(\mathcal{M}_n (\mathbf{X}) )}}}  \\
& = \frac{1}{{ \sqrt{ \widehat{{\rm var}(\mathcal{M}_n(\mathbf{X}))}}}} \left( \overline{X}_n^{\top} \overline{X}_n - \frac{1}{n} \widehat{ {\rm tr} \{ \Omega_n(\mathbf{X}) \}} \right) \\
&  = \sqrt{ \frac{{\rm var}(\mtxt{M}({\bf Y}) )}{ \widehat{ {\rm var}(\mtxt{M} (\mathbf{X}) )}} } \frac{1}{\sqrt{  {\rm var}(\mtxt{M}({\bf Y}) )}}  \left( \overline{X}_n^{\top} \overline{X}_n - \frac{1}{n} \widehat{ {\rm tr} \{ \Omega_n(\mathbf{X}) \}} \right)
\end{align*}

By Theorem 3.2, the variance estimator is ratio-consistent even under the local alternative, hence $\widehat{ {\rm var}(\mtxt{M} (\mathbf{X}))}/{\rm var}(\mtxt{M}({\bf Y}))\stackrel{p}{\rightarrow} 1$. The matrix $\Omega_n$ is the second central moment of the data and is hence independent of $\mu$. Thus, we have $\Omega_n(\mathbf{X}) = \Omega_n(\mathbf{Y}) = \Omega_n$. Expressing $\mathcal{M}_n(\mathbf{X})$ in terms of $\mathbf{Y}$, we have
\begin{align*}
\mathcal{M}_n(\mathbf{X}) & = \overline{X}_n^{\top} \overline{X}_n - \frac{1}{n} \widehat{ {\rm tr} \left( \Omega_n(\mathbf{X}) \right)} \\
&= \left( \overline{Y}_n + \mu \right)^{\top} \left( \overline{Y}_n + \mu \right) - \frac{1}{n} \widehat{ {\rm tr} \left( \Omega_n (\mathbf{X}) \right)} \\
& = \overline{Y}_n^{\top} \overline{Y}_n - 2 \mu^{\top} \overline{Y}_n + \mu^{\top}{\mu} - \frac{1}{n} \widehat{ {\rm tr} \left( \Omega_n (\mathbf{Y}) \right)} \\
& = \mathcal{M}_n (\mathbf{Y}) - 2 \mu^{\top} \overline{Y}_n + \mu^{\top}{\mu}.
\end{align*}

The second term, $ \mu^{\top} \overline{Y}_n$ is normally distributed with $E(\mu^{\top} Y) = \mu^{\top} 0 = 0$ and variance ${\rm var} ({\mu^{\top} \overline{Y}_n}) = \frac{1}{n} \mu^{\top} \Omega_n \mu$. This gives
\begin{align}
{\rm var} \left(\frac{ \mu^{\top} \overline{Y}_n}{ \sqrt{ {\rm var}(\mathcal{M}_n({\bf Y}) )}} \right) &= \frac{ {\rm var}(\mu^{\top} \overline{Y}_n)}{{\rm var}(\mathcal{M}_n({\bf Y}))} \\
& = \frac{ n^{-1} \mu^{\top} \Omega_n \mu}{2 n^{-2} tr(\Omega_n^2)} \{1 + o(1) \},
\label{eqn:varratio}
\end{align}
where the last equality follows from Proposition 3.1. Writing $\Omega_n$ in terms of the autocovariance matrices $\Gamma(h), h \in \mathcal{M}$, we have the ratio as a linear combination of $\mu^{\top} \Gamma(h) \mu, h = 0, \pm 1, \ldots, \pm M$.

 For each $h$, we consider 
the singular value decomposition (SVD) of $\Gamma(h)$, $\Gamma(h) = U \Lambda V^{\top}$,  where $\Lambda$ is the diagonal matrix with positive square roots of eigenvalues of $ \left[ \Gamma(h) \Gamma(h)^{\top} \right]$. It yields $\Gamma(h) \Gamma(-h) = U \Lambda^2 U^{\top}$ which gives 
$U \Lambda U^{\top} = \left(\Gamma(h) \Gamma(-h) \right)^{1/2}$. Thus, 
\begin{align}
\mu^{\top} \Gamma(h) \mu =&  \mu^{\top} U \Lambda V^{\top} \mu = \left( \Lambda^{1/2} U^{\top} \mu \right)^{\top} \left( \Lambda^{1/2} V^{\top} \mu \right) \nonumber \\
& \leq  \sqrt{\mu^{\top} \left[ \Gamma(h) \Gamma(-h) \right]^{1/2} \mu} \sqrt{\mu^{\top} \left[ \Gamma(-h) \Gamma(h) \right]^{1/2} \mu}, \label{eqn:ratiovar}
\end{align}
where the inequality comes from the Cauchy-Schwarz inequality and 
\begin{align*}
\left( \Lambda^{1/2} U^{\top} \mu \right)^{\top} \left( \Lambda^{1/2} U^{\top} \mu \right) = \mu^{\top} U \Lambda U^{\top} \mu = \mu^{\top} \left(\Gamma(h) \Gamma(-h) \right)^{1/2} \mu, \\
\left( \Lambda^{1/2} V^{\top} \mu \right)^{\top} \left( \Lambda^{1/2} V^{\top} \mu \right) = \mu^{\top} V \Lambda V^{\top} \mu = \mu^{\top} \left(\Gamma(-h) \Gamma(h) \right)^{1/2} \mu.
\end{align*}
The upper bound on $\mu^{\top} \Omega_n \mu$ is given by
\begin{align}
\mu^{\top} \Omega_n \mu & = \mathop{\sum}_{h = -M}^M \left(1 - \frac{|h|}{n} \right) \mu^{\top} \Gamma(h) \mu \nonumber \\
& \leq  \mathop{\sum}_{h = -M}^M  \left(1 - \frac{|h|}{n} \right) \sqrt{\mu^{\top} \left[ \Gamma(h) \Gamma(-h) \right]^{1/2} \mu} \sqrt{\mu^{\top} \left[ \Gamma(-h) \Gamma(h) \right]^{1/2} \mu}.
\end{align}
In sequel, the ratio in \eqref{eqn:varratio} is bounded by
\begin{align}
\frac{ n^{-1}}{2n^{-2} {\rm tr}(\Omega_n^2)} \mu^{\top} \Omega_n \mu & \leq \frac{ n^{-1}}{2n^{-2} {\rm tr}(\Omega_n^2)} \mathop{\sum}_{h = -M}^M  \left(1 - \frac{|h|}{n} \right) \sqrt{\mu^{\top} \left[ \Gamma(h) \Gamma(-h) \right]^{1/2} \mu} \sqrt{\mu^{\top} \left[ \Gamma(-h) \Gamma(h) \right]^{1/2} \mu} \nonumber \\
& = \mathop{\sum}_{h = -M}^M  \left(1 - \frac{|h|}{n} \right)   \sqrt{ \frac{\mu^{\top} \left[ \Gamma(h) \Gamma(-h) \right]^{1/2} \mu}{n^{-1} {\rm tr}(\Omega_n^2)}}   \sqrt{\frac{\mu^{\top} \left[ \Gamma(-h) \Gamma(h) \right]^{1/2} \mu}{n^{-1} {\rm tr}(\Omega_n^2)}}. \label{eqn:ratiovar-2nd}
\end{align}

\citet{ayyala2017mean} make the assumption (see Eqn. (11) of \citet{ayyala2017mean})
\begin{equation} \label{eqn:assume-ayyala2017} 
\mu^{\top} \left[ \Gamma(h) \Gamma(-h) \right]^{1/2} \mu =    o( n^{-1} tr(\Omega_n^2)),   
\end{equation}
and this makes the summands in (\ref{eqn:ratiovar-2nd}) are all in the order of $o(1)$ and the summation (\ref{eqn:ratiovar-2nd})
converges to zero if $M$ is finite. In this article, we assume $M = O(n^{1/8})$ and change the condition (\ref{eqn:assume-ayyala2017}) accordingly to 
\begin{eqnarray}
\mu^{\top} \left[ \Gamma(h) \Gamma(-h) \right]^{1/2} \mu &=&    o((M+1)^{-1} n^{-1} tr(\Omega_n^2)).
\label{eqn:localalt2}
\end{eqnarray}
In (\ref{eqn:localalt2}), if $M$ is bounded, the 
condition \eqref{eqn:localalt2} equals to the condition (\ref{eqn:assume-ayyala2017}) that is in \citet{ayyala2017mean}.
When $M$ has the order of $n^{1/8}$, the order in (\ref{eqn:localalt2}) becomes  $o \left ( n^{-9/8} \tr \left( \Omega_n^2 \right) \right)$.   
With the assumptions $M = O(n^{1/8})$  and (\ref{eqn:localalt2}), 
we have the variance of $\mu^{\top} \overline{Y}_n/\sqrt{ {\rm var}{\mathcal{M}_n ({\bf Y})}}$ converging to zero. Further,
 this quantity has expected value zero, we claim that it converges to zero in probability.

\color{black}
Combining asymptotic normality of $\mathcal{M}_n(\mathbf{Y})$ and the result above, we can show that 
\begin{align*}
1 - \beta = P \left( \mathcal{M}_n (\mathbf{Y}) > -z_{\alpha} \right)  & = P \left( \frac{\mathcal{M}_n (\mathbf{Y}) - 2 \mu^{\top} \overline{Y}_n + \mu^{\top}{\mu}}{\sqrt{2 n^{-2} \tr (\Omega_n^2)}}  > z_{\alpha} \right) \\
& \simeq P \left( \frac{\mathcal{M}_n (\mathbf{Y}) +  \mu^{\top}{\mu}}{\sqrt{2 n^{-2} \tr (\Omega_n^2)}}  > z_{\alpha} \right) \\
& \simeq P \left(  Z > z_{\alpha} - \frac{\mu^{\top}{\mu}}{\sqrt{2 n^{-2} \tr (\Omega_n^2)}} \right) \\
& \simeq \Phi \left( -z_{\alpha} + \frac{n \mu^{\top}{\mu}}{\sqrt{2 \tr (\Omega_n^2)}} \right).
\end{align*}

\section{Concluding remarks}  

We conclude the note with a few additional remarks. First, for practical use, the denominator of the test statistic in 
 of Theorem \ref{thm:theorem21} 
is replaced with the ratio consistent variance estimator given in Theorem 3.2 of \citet*{ayyala2017mean}. For asymptotic normality
using ratio consistent estimator, uncorrelatedness of $B_{ij}$s is sufficient and hence  two main 
statements (Theorem 3.1 and 3.2 of their paper) of  \citet*{ayyala2017mean} remain valid and also the numerical study 
presented in \citet{ayyala2017mean} 
is meaningful.  Second, for asymptotic power calculation, \citet*{ayyala2017mean} assert that asymptotic normality of the statistic 
$\frac{M_{n} - \mu^{\rm T} \mu}{\sqrt{\widehat{{\rm Var}\left(M_{n}\right)}}}$ follows from ${\rm E}\left(M_{n}\right) = \mu^{\rm T}\mu$. 
However, in order to have this result, the local alternative condition needs to be modified. The power function holds only if the mean belongs to the local alternative mentioned in \eqref{eqn:localalt2}, instead of  the condition in equation (11) in \citet*{ayyala2017mean}. 
To show that the statistic is asymptotically normal under the 
alternative hypothesis,
more complicate proof is required. 
Finally, we take this opportunity to list some other typos in \citet{ayyala2017mean}:

\begin{enumerate}

\item On pp. 138, the identity $\mathrm{E}\left[n\bar{X}_{n}^{\top}\bar{X}_{n} - {\rm tr}(S_{n})\right] = \mu^{\top} \mu$ should be  ${\rm E}\left[ \bar{X}_{n}^{\top}\bar{X}_{n} - n^{-1}{\rm tr}(S_{n}) \right] = \mu^{\top} \mu$.

\item On pp. 139, the expected value of Euclidean norm of sample mean should be 
\begin{equation} \nonumber
\mathrm{E}\left[\bar{X}_{n}^{\rm T}\bar{X}_{n}) \right] = \mu^{\top} \mu+\frac{1}{n^{2}}\sum_{h \in \mathcal{M}}\sum_{k \in \mathcal{M}} {\rm tr}\left\{\Gamma(h-k)\right\} = \mu^{\rm T} \mu + \frac{1}{n}{\rm tr}\left(\Omega_{n}\right).
\end{equation}

\item The value of $b_n(0)$ is mentioned on pp. 139 do not have a factor of $1/n$ multiplied. It should be changed by mutiplying by $n$, i.e. $b_n(0) = 1$ and $b_n(i) = 2\{1 - (i - 1)/M\}$ for $i \in \{2, \ldots, M + 1 \}$. The expected value of $M_{n} = \bar{X}_{n}^{\top}\bar{X}_{n} - \frac{1}{n}\widehat{{\rm tr}(\Omega_{n})}$ is $\mu^{\top} \mu$.

\item The value of $\pi_{n}$ presented on pp. 139 should be corrected as
\begin{align*}
\pi_{n}(t,s) & = \frac{1}{n^{2}} \left\{1 - \frac{1}{n}\sum_{h \in \mathcal{M}_{+}}\left(1-\frac{h}{n}\right)\beta_{n}(h)\right\} \\
& - \sum_{h \in \mathcal{M}_{+}} \left\{ \frac{\beta_{n}(h)}{n}I(t+h=s) - \frac{\beta_{n}(h)}{n^{2}}\left( I(t \le n-h) + I(t > h) \right) \right\}.
\end{align*}

\end{enumerate}

\bibliographystyle{apa}
\bibliography{ref}

\end{document}